\renewcommand{\:}{\colon}
\renewcommand{\.}{\mskip.5\thinmuskip\relax}
\newcommand{\rarrow}{\longrightarrow}
\newcommand{\larrow}{\longleftarrow}
\newcommand{\lrarrow}{\.\relbar\joinrel\relbar\joinrel\rightarrow\.}
\DeclareMathOperator{\Hom}{Hom}
\DeclareMathOperator{\id}{id}
\DeclareMathOperator{\colim}{colim}
\newcommand{\A}{{\mathsf A}}
\newcommand{\B}{{\mathsf B}}
\newcommand{\E}{{\mathsf E}}
\newcommand{\F}{{\mathsf F}}
\DeclareMathOperator{\Sets}{\mathsf{Sets}}
\newcommand{\arity}{{\mathsf{ar}}}
\newcommand{\sop}{{\mathsf{op}}}
\newcommand{\boT}{{\mathbb T}}
\newcommand{\boZ}{{\mathbb Z}}
\newcommand{\boR}{{\mathbb R}}
\newcommand{\modl}{{\operatorname{\mathsf{--mod}}}}
\theoremstyle{plain}
\newtheorem{thm}{Theorem}[section]
\newtheorem{lem}[thm]{Lemma}
\newtheorem{prop}[thm]{Proposition}
\theoremstyle{definition}
\newtheorem{ex}[thm]{Example}
\newtheorem{exs}[thm]{Examples}
\newtheorem{rem}[thm]{Remark}
\begin{document}

\title{Exactness of direct limits for abelian categories
with an injective cogenerator}

\author{Leonid Positselski and Jan \v S\v tov\'\i\v cek}

\address{Leonid Positselski, Institute of Mathematics, Czech Academy
of Sciences, \v Zitn\'a~25, 115~67 Prague~1, Czech Republic; and
\newline\indent Laboratory of Algebraic Geometry, National Research
University Higher School of Economics, Moscow 119048; and
\newline\indent Sector of Algebra and Number Theory, Institute for
Information Transmission Problems, Moscow 127051, Russia; and
\newline\indent Department of Mathematics, Faculty of Natural Sciences,
University of Haifa, Mount Carmel, Haifa 31905, Israel}

\email{posic@mccme.ru}

\address{Jan {\v S}{\v{t}}ov{\'{\i}}{\v{c}}ek, Charles University
in Prague, Faculty of Mathematics and Physics, Department of Algebra,
Sokolovsk\'a 83, 186 75 Praha, Czech Republic}

\email{stovicek@karlin.mff.cuni.cz}

\begin{abstract}
We prove that the exactness of direct limits in an abelian category with products and an injective cogenerator $J$ is equivalent to a condition on $J$ which is well-known to characterize pure-injectivity in module categories, and we describe an application of this result to the tilting theory. We derive our result as a consequence of a more general characterization of when inverse limits in the Eilenberg--Moore category of a monad on the category of sets preserve regular epimorphisms.
\end{abstract}

\maketitle

\setcounter{tocdepth}{1}
\tableofcontents

\section*{Introduction} \label{sec:intro}
\medskip

 Grothendieck observed~\cite[N$^\circ$\,1.5]{GrToh} that an abelian
category cannot be too nice: if in a complete, cocomplete abelian
category $\A$ both the filtered colimits and cofiltered limits are exact,
then all objects of $\A$ are zero objects.
 The argument proceeds as follows: if the filtered colimits are exact
in $\A$, then, for every family of objects $A_x\in\A$ indexed by
a set $X$, the natural morphism from the coproduct to the product
$\eta\:\coprod_{x\in X}A_x\rarrow \prod_{x\in X}A_x$ is a monomorphism.
 If the cofiltered limits are exact, then this morphism is an epimorphism.
 Assuming both these properties of an abelian category $\A$,
the morphism~$\eta$ is an isomorphism.

 In particular, given an object $A\in\A$, one can choose any infinite
set $X$ and take $A_x=A$ for all $x\in X$.
 If the natural morphism $\eta_{A,X}\:A^{(X)}\rarrow A^X$ is
an isomorphism, one can use its inverse morphism $\eta_{A,X}^{-1}$ in
order to define an operation of infinite summation on the group
$E=\Hom_\A(A,A)$ of endomorphisms of $A$, that is an abelian group
homomorphism
$$
 \sum_X\:E^X\lrarrow E
$$
extending the natural finite summation map
$$
 E^X\,\supset E^{(X)}\,\lrarrow E.
$$
 Moreover, the map $\sum_Y$ is defined for every set $Y$ of
the cardinality not greater than $X$, and for any injective map
$\iota\:Y\hookrightarrow X$ the maps $\sum_X$ and $\sum_Y$ form
a commutative triangle diagram with the split monomorphism
$E^Y\rarrow E^X$ induced by~$\iota$.
 Such kind of infinite summation structure can exist on a nontrivial
commutative \emph{monoid} $E$, but when $E$ is an abelian \emph{group},
a standard cancellation trick shows that $E=0$.

 In this paper, we offer some answers to the following two questions.
 Firstly, what can be said about abelian categories $\A$ in which
the map~$\eta$ is always a monomorphism?
 Can one claim, perhaps under suitable additional assumptions, that
the filtered colimits are exact in~$\A$\,?
 Secondly, what can one do with functorial infinite summation operations
$\sum_X$ on abelian groups, if one drops or weakens the assumption of
commutativity of the diagrams related to injective maps of
sets~$\iota$\,?

 In the terminology of~\cite{GrToh}, an abelian category is said to
satisfy Ab3 if it is cocomplete, Ab4 if it is cocomplete and
the coproduct functors are exact in it, and Ab5 if it is cocomplete
and the filtered colimits are exact.
 So Ab5 implies Ab4 and Ab4 implies Ab3.
 A finer classification is offered in the book~\cite{Mit}: in
the terminology of~\cite[Section~III.1]{Mit}, an abelian category $\A$
is $C_1$ if it is cocomplete with exact coproduct functors, $\A$ is
$C_2$ if it is complete and cocomplete and the all natural morphisms
$\eta\:\coprod_{x\in X}A_x\rarrow\prod_{x\in X}A_x$ are monomorphisms in
$\A$, and $\A$ is $C_3$ if it is cocomplete with exact filtered colimits
(cf.~\cite[Theorem~III.1.9]{Mit}).
 Any complete $C_3$\+category is $C_2$ \cite[Corollary~III.1.3]{Mit},
and any $C_2$\+category is $C_1$ \cite[Proposition~III.1.1]{Mit}.

 In this paper we prove that any abelian $C_2$\+category with
an injective cogenerator is $C_3$ (\,$=$~satisfies Ab5).
 In fact, a seemingly weaker assumption is sufficient: \emph{any
complete abelian category with an injective cogenerator $J$ such that
the natural map $\eta_{J,X}\:J^{(X)}\rarrow J^X$ is a monomorphism for
all sets $X$ satisfies Ab5}.
 Here it should be pointed out that any complete abelian category with
a cogenerator is cocomplete.
 It is also worth noticing that any complete abelian category with
a set of generators and enough injective objects has an injective
cogenerator.

 Finally, we observe that if $J\in\A$ is an injective cogenerator then
the morphism $\eta_{J,X}\:J^{(X)}\rarrow J^X$ is a monomorphism if and
only if every morphism $J^{(X)}\rarrow J$ factors through $\eta_{J,X}$.
 Moreover, morphisms $f\:J^{(X)}\rarrow J$ correspond bijectively to
$X$\+indexed families of endomorphisms $f_x\:J\rarrow J$, and to every
such family one can naturally assign an endomorphism $\prod_x f_x\:
J^X\rarrow J^X$ of the object~$J^X$.
 It follows easily that every morphism $J^{(X)}\rarrow J$ factors
through~$\eta_{J,X}$ if and only if one specific such morphism, viz.,
the natural summation morphism $s_{J,X}\:J^{(X)}\rarrow J$, does
(see Theorem~\ref{thm:ab5*-via-sums}).
 Thus the morphism $\eta_{J,X}$ is a monomorphism if and only if
the summation morphism $s_{J,X}$ factors through~$\eta_{J,X}$.
 The latter condition is similar to a condition characterizing
pure-injective modules~\cite[Theorem~7.1(iv)]{JL}.

 Let us mention an application of our result to the tilting theory in
abelian categories.
 According to~\cite[Corollary~3.12]{PS}, there is a bijective
correspondence between complete, cocomplete abelian categories
$\A$ with an injective cogenerator $J$ and an $n$\+tilting object
$T\in\A$, and complete, cocomplete abelian categories $\B$ with
a projective generator $P$ and an $n$\+cotilting object $W\in\B$.
 This correspondence is accompanied by an exact category equivalence
$\E\simeq\F$ between the tilting class $\E\subset\A$ and the cotilting
class $\F\subset\B$ taking the tilting object $T$ to the projective
generator $P$ and the injective cogenerator $J$ to the cotilting
object~$W$.

 Moreover, according to~\cite[Lemma~4.3 and Remark~4.4]{PS}, both
the full subcategories $\E$ and $\F$ are closed under both the products
and coproducts in $\A$ and~$\B$, respectively.
 In particular, the equivalence between them takes powers
$W^X\in\F\subset\B$ of the $n$\+cotilting object $W$ to powers
$J^X\in\E\subset\A$ of the injective cogenerator~$J$, and
the same applies to the copowers $W^{(X)}\in\F$ and $J^{(X)}\in\E$.
 Therefore, it follows from the results of the present paper that, in
the categorical $n$\+tilting-cotilting correspondence situation
of~\cite[Corollary~3.12]{PS}, \emph{the abelian category $\A$
satisfies Ab5 if and only if the abelian group homomorphism
$\Hom_\B(\eta_{W,X},W)\:\Hom_\B(W^X,W)\rarrow\Hom_\B(W^{(X)},W)$
is surjective, and if and only if the natural summation morphism
$s_{W,X}\:W^{(X)}\rarrow W$ factors through the morphism $\eta_{W,X}\:
W^{(X)}\rarrow W^X$ for all sets~$X$}.

\subsection*{Acknowledgments}
 We thank Ji\v{r}\'{\i} Rosick\'{y} for useful discussions and especially
for bringing the paper~\cite{ARV0} to our attention, which helped us
to contemplate a nonadditive generalization of the fact that
the existence of limit terms in a monad implies Ab5$^*$.
 We are also grateful to the anonymous referee for the suggestion to
include Examples~\ref{exs:compact-hausdorff}.

 Leonid Positselski's research is supported by research plan RVO:~67985840,
by the Israel Science Foundation grant~\#\,446/15, and by the Czech Science Foundation under the grant P201/12/G028.
 Jan \v{S}\v{t}ov\'{\i}\v{c}ek's research is supported by
the Czech Science Foundation under the grant P201/12/G028.

\bigskip
\section{Preliminaries} \label{sec:prelim}
\medskip

In the introduction we are mostly concerned with when an abelian category $\A$ with products and an injective cogenerator is Ab5. In order to facilitate the discussion of our main result, it will be helpful to switch to a formally dual setting, i.e.\ to study abelian categories $\B$ with coproducts and a projective generator. The reason is that such categories admit a very convenient description as modules over additive monads; see the introduction of~\cite{PR}, \cite[\S1]{Pper} or \cite[\S6]{PS}.

\smallskip

Let us be more specific. Recall that a \emph{monad} (see e.g.\ \cite[Appendix A]{ARV} or \cite[\S6]{Wr}) on the category of sets is a triple $(\boT,\mu,\varepsilon)$ where $\boT\:\Sets\rarrow\Sets$ is an endofunctor, $\mu\: \boT\circ\boT\rarrow\boT$ and $\varepsilon\:\id\rarrow\boT$ are natural transformations, and the associativity ($\mu\circ(\boT\mu)=\mu\circ(\mu\boT)\: \boT\circ\boT\circ\boT\rarrow\boT$) and unitality ($\mu\circ(\boT\varepsilon) = \id_\boT = \mu\circ(\varepsilon\boT) \:\boT\rarrow\boT$) axioms hold.

Given a monad $\boT$ on $\Sets$, we can consider the category $\boT\modl$ of \emph{modules} over $\boT$ (perhaps a more standard terminology is the Eilenberg-Moore category of $\boT$ or the category of $\boT$-algebras, but in this paper we tend to view $\boT$ as a generalization of an associative ring, see below).
A $\boT$-module is a map $\phi\:\boT(M)\rarrow M$ in $\Sets$, satisfying also an associativity ($\phi\circ\boT(\phi)=\phi\circ\mu_M$) and a unitality ($\phi\circ\varepsilon_M=\id_M$) axiom.
A morphism of $\boT$-modules $\phi\:\boT(M)\rarrow M$ and $\psi\: \boT(N)\rarrow N$ is a map $f\: M\rarrow N$ making the obvious square commutative, i.e.\ $f\circ\phi = \psi\circ\boT(f)$. A typical example of a monad and its category of modules is as follows:

\begin{ex} \label{ex:monad}
We can interpret the category of algebras of a given signature $\Sigma$ as $\boT\modl$ as follows. Let $\Sigma$ be the signature (i.e.\ a formal set of operation symbols) and $\arity\: \Sigma\rarrow \omega$ be an arity function (e.g.\ for groups we have $\Sigma=\{\cdot,{^{-1}},1\}$ with arities $2$, $1$ and $0$, respectively). A $\Sigma$-algebra is then a set $M$ together with maps $\phi_\sigma\: M^{\arity(\sigma)}\rarrow M$, one for each operation symbol $\sigma$.

By substituting one operation into another, one can define the set $\boT(X)$ of terms in $\Sigma$ on a given set of variables $X$.
Typically, one only considers $\Sigma$-algebras satisfying equalities of the form $t_1 = t_2$, where $t_1,t_2$ are terms in the same set of variables (e.g.\ the associativity relation in the case of groups). In that case, $\boT(X)$ will be just the set of terms on $X$ modulo consequences of these equalities or, equivalently, a free algebra on the set of generators $X$. In fact, $\boT$ is naturally a functor $\Sets\rarrow\Sets$, as a map $f\: X\rarrow Y$ in $\Sets$ induces a map $\boT(f)\: \boT(X)\rarrow\boT(Y)$, which sends a term~$t$ to one in which we substitute $f(x)\in Y$ for each variable $x\in X$. We also obtain the structure of a monad, where $\mu_X\:\boT(\boT(X))\rarrow\boT(X)$ amounts to the `opening of brackets' and $\varepsilon_X\: X\rarrow\boT(X)$ interprets each variable symbol $x\in X$ as a term.

%For each $\Sigma$-algebra $M$ and a term $t\in\boT(X)$, we thus have a corresponding map $\phi_t\: M^X\rarrow M$.
A $\Sigma$-algebra then defines a $\boT$-module $\phi\:\boT(M)\rarrow M$, where $\phi$ evaluates each term in elements of $M$. In fact, it is well-known that the structure of a $\Sigma$-algebra is equivalent to the structure of a $\boT$-module.
\end{ex}

\begin{ex} \label{ex:modules}
Let $R$ be a ring. Then a left $R$-module is an algebra with the signature $(+,-,0, r\!\cdot\!- \; (r\in R))$. The corresponding monad is given by $\boT(X)=R^{(X)}$, where $(r_x)_{x\in X} \in \boT(X)$ represents the term $\sum_{x\in X} r_x\!\cdot\!x$, and we have an isomorphism of categories $R\modl \cong \boT\modl$. We refer to \cite{PR}, \cite[\S1]{Pper} or \cite[\S6]{PS} for more details.
\end{ex}

It will be useful in the sequel that the above examples are rather representative. A $\boT$-module $M$ (with the structure map $\phi\:\boT(M)\rarrow M$) can be viewed as a set equipped with possibly infinitary operations
\[ t_M\: M^X \lrarrow M, \]
one for each set $X$ and an element $t\in\boT(X)$, where for each $m=(m_x)_{x\in X}\in M^X$, the value $t_M(m)\in M$ is defined as the image of $t$ under the composition
\begin{equation} \label{eq:term}
\boT(X) \overset{\boT(m)}\lrarrow \boT(M) \overset{\phi}\lrarrow M.
\end{equation}
Such terms can be composed---if $t\in\boT(X)$ and $s_x\in\boT(Y)$ for each $x\in X$, then the composition
\[
M^Y \overset{((s_x)_M)}\lrarrow M^X \overset{t_M}\lrarrow M
\]
is given by a term, which we formally obtain by evaluating the map
\[
\boT(X) \overset{\boT(s)}\lrarrow \boT(\boT(Y)) \overset{\mu_Y}\lrarrow \boT(Y)
\]
at $t$ (here, $s\:X\rarrow\boT(Y)$ is the map which sends $x$ to $s_x$).

We can equip $\boT(X)$ with the structure map $\phi=\mu_X\:\boT(\boT(X))\rarrow\boT(X)$. The resulting $\boT$-module is then free on the generators in the image of $X$ under $\varepsilon_X\:X\rarrow\boT(X)$. That is, any map $f\colon X\rarrow M$, where $M\in\boT\modl$ with the structure map $\phi\:\boT(M)\rarrow M$, extends uniquely to a homomorphism $\boT(X)\rarrow M$, which is defined via~\eqref{eq:term}.

As with usual module categories, the forgetful functor $\boT\modl \rarrow \Sets$ is faithful and preserves (in fact creates) categorical limits. Since every injective map (with a nonempty domain) in $\Sets$ is a section and every surjective map is a retraction, $\boT$ preserves both injectivity and surjectivity. It follows that the image of any homomorphism $f\:M\rarrow N$ of $\boT$-modules is canonically again a $\boT$-module.

The main question which we aim to study here is when an inverse limit of surjective maps of $\boT$-modules is again surjective (if $\B=\boT\modl$ is an abelian category, this is precisely the Ab5 condition for $\A=\B^\sop$). Note that there is an intrinsic characterization of surjective maps $f\:M\rarrow N$ in the category $\boT$-modules, they are so-called \emph{regular epimorphisms}. By definition, these are coequalizers of a pair of maps $M' \rightrightarrows M$, \cite[0.16]{ARV}. Indeed, every coequalizer is surjective by the previous paragraph, while every surjective map is the coequalizer of the two obvious maps $M' := M\times_NM \rightrightarrows M$ induced by the two projections $M\times M\rightrightarrows M$.

\smallskip

Finally, we turn back to abelian categories $\B$ with coproducts and a projective generator $P\in\B$. Here, we can define a monad
\[ \boT\colon X \longmapsto \Hom_\B(P,P^{(X)}), \]
where $\varepsilon_X\colon X \rarrow \Hom_\B(P,P^{(X)})$ sends $x\in X$ to the split inclusion $\iota_x\:P\rarrow P^{(X)}$ into the coproduct of $X$ copies of $P$, and where $\mu\:\boT\circ\boT\rarrow\boT$ is given by composition of maps and the universal property of the coproducts. In particular, $\boT(1) = \Hom_\B(P,P)$ is the endomorphism ring of $P$ and the ring structure can be recovered from $\boT$.

If $B\in\B$ is an arbitrary object, then $M := \Hom_\B(P,B)$ is naturally a $\boT$-module via $\phi:=\Hom_\B(P,c)\:\boT(M)\rarrow M$, where $c\:P^{(\Hom_\B(P,B))}\rarrow B$ is the canonical map. This assignment is, in fact, functorial, and we also have that

\begin{lem} \label{lem:monads}
The functor $\Hom_\B(P,-)\:\B\rarrow\boT\modl$ is an equivalence of categories. Under this equivalence, $P$ corresponds to $\boT(1)$.
\end{lem}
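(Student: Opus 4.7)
The functor in question is the Eilenberg--Moore comparison functor for a specific adjunction: since $\B$ has coproducts, the assignment $F\:X\mapsto P^{(X)}$ defines a left adjoint $F\:\Sets\rarrow\B$ to $U=\Hom_\B(P,-)$, and the composite $UF$ together with its unit and multiplication is precisely the monad $\boT$ from the paper. The canonical map $c\:P^{(\Hom_\B(P,B))}\rarrow B$ used to define the $\boT$-action on $\Hom_\B(P,B)$ is the adjunction counit $\varepsilon_B$, so the action $\phi=\Hom_\B(P,c)$ matches the standard $\boT$-module structure produced by the adjunction. Thus the lemma amounts to the assertion that this adjunction is monadic.

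To prove monadicity, I plan to invoke Beck's theorem, whose hypotheses are (a) $U$ reflects isomorphisms, and (b) $\B$ has, and $U$ preserves, coequalizers of $U$-split pairs (reflexive pairs suffice in the crude version). For (a): $P$ being a generator makes $U$ faithful; $P$ being projective makes $U$ preserve epimorphisms, which together with left exactness (automatic for a right adjoint) means $U$ is exact. A faithful exact functor between abelian categories reflects both monomorphisms and epimorphisms, and hence reflects isomorphisms. For (b): $\B$ is abelian so every parallel pair has a coequalizer, and any exact functor between abelian categories preserves coequalizers, since in an abelian category a coequalizer is the cokernel of a difference.

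Having established that $\Hom_\B(P,-)\:\B\rarrow\boT\modl$ is an equivalence, the second assertion is direct bookkeeping: by definition $\boT(1)=\Hom_\B(P,P^{(1)})\cong\Hom_\B(P,P)$, so the underlying set of $K(P)$ is $\boT(1)$, and one identifies the $\boT$-action $\Hom_\B(P,\varepsilon_P)$ on it with $\mu_1\:\boT\boT(1)\rarrow\boT(1)$, i.e.\ with the free $\boT$-module structure on one generator. The main point requiring care is the dictionary between the explicit description of $\boT$, $\mu$, and $\phi$ given in the paper and the abstract Eilenberg--Moore setup; once that is fixed, the checks above are completely formal and constitute the whole argument.
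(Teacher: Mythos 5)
Your proposal is correct, and it is more of a proof than the paper itself offers: the paper disposes of this lemma with a bare citation to \cite[\S6.2]{PS}, so there is no in-text argument to compare against. Your route---identify $\Hom_\B(P,-)$ as the Eilenberg--Moore comparison functor for the adjunction $P^{(-)}\dashv\Hom_\B(P,-)$ and verify Beck's monadicity criteria---is the standard way to prove this and is, in substance, what the cited reference does. One point deserves slightly more care than you give it: Beck's theorem is applied to $U=\Hom_\B(P,-)$ viewed as a functor into $\Sets$, which is not abelian, so the phrases ``reflects isomorphisms between abelian categories'' and ``exact functors preserve coequalizers'' do not apply verbatim. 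The repair is easy and worth stating: $U$ factors through abelian groups; a map of $\B$ whose image is a bijection is mono because $P$ is a generator and epi because $P$ is projective and a generator (every map $P\rarrow\coker f$ lifts and then dies), hence an isomorphism; and for the coequalizer condition it suffices to note that $U$-split (or reflexive) pairs have absolute (resp.\ underlying-set-computed) coequalizers in the category of abelian groups, where $U$ does preserve them by exactness, namely as $\coker(Uf-Ug)$. With that patch the argument is complete, and the identification of $U(P)$ with the free module $\boT(1)$ is the routine bookkeeping you describe.
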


\begin{proof}
See e.g.\ \cite[\S6.2]{PS}.
\end{proof}

The monads $\boT$ which arise from abelian categories with coproducts and a projective generator were called \emph{additive} in~\cite{PR} and intrinsically characterized in \cite[Lemma 1.1]{Pper} (see \S\ref{sec:sums} below). In this case, there is no distinction between regular epimorphisms as mentioned above and ordinary epimorphisms in the abelian category $\boT\modl$.

\bigskip
\section{Limit terms and exact inverse limits} \label{sec:limits}
\medskip

Let $(\boT,\mu,\varepsilon)$ be a monad on $\Sets$. We will characterize the situation where inverse limits (i.e.\ limits indexed by a downwards directed set $I$, otherwise known as the directed or cofiltered limits) preserve regular epimorphisms of $\boT$-modules. (Cf.\ the discussion of the preservation of regular epimorphisms by products in~\cite{ARV0}.) The characterization is given in terms of the existence of certain terms in~$\boT$.

To this end, let $\alpha$ be an ordinal number, which we view as usual as the set of strictly smaller ordinals. Note that for each $\beta<\alpha$, the inclusion $\alpha\setminus\beta\subset\alpha$ induces an inclusion $\boT(\alpha\setminus\beta)\hookrightarrow\boT(\alpha)$. We will, thus, identify $\boT(\alpha\setminus\beta)$ with a subset of $\boT(\alpha)$, which consists precisely of all terms $t\in\boT(\alpha)$, where the corresponding operation $t_M\colon M^\alpha\rarrow M$ does not depend, for any $M\in\boT\modl$, on the arguments with indices smaller than $\beta$ (see Lemma~\ref{lem:tails}).

We will call a term $t\in\boT(\alpha)$ is a \emph{limit term} if
\begin{enumerate}
\item[(L1)] $t\in\bigcap_{\beta<\alpha}\boT(\alpha\setminus\beta)$;
\item[(L2)] whenever $M\in\boT\modl$ and $(m_\gamma)_{\gamma<\alpha}$ with $m_\gamma=m$ for each $\gamma$ is a constant sequence in $M^\alpha$, then $t_M(m_\gamma)_{\gamma<\alpha} = m$.
\end{enumerate}

We will often be interested in mere existence of a limit term for given $\alpha$ and we will denote any such term by $\lim_\alpha\in\boT(\alpha)$. If $M$ is a $\boT$-module, we will write $\lim_{\beta<\alpha} m_\beta$ for the value of $(\lim_\alpha)_M\: M^\alpha\rarrow M$ on $(m_\beta)_{\beta<\alpha} \in M^\alpha$.

The second condition in the definition has an easy interpretation. If $t\in\boT(\alpha)$ is a term satisfying (L2), the map $\boT(1)\rarrow\boT(\alpha)$ which sends the free generator of $\boT(1)$ to $t\in\boT(\alpha)$ and which we will, by abuse of notation, denote $t$ as well, is a section to the map $\boT(c)\:\boT(\alpha)\rarrow\boT(1)$, where $c\:\alpha\rarrow1$ is the unique map of sets.

The interpretation of condition (L1) is clarified by the following lemma which implies that for any $M\in\boT\modl$, the value of $t_M\colon M^\alpha\rarrow M$ at $(m_\gamma)_{\gamma<\alpha}$ must not depend on any proper initial subsequence $(m_\gamma)_{\gamma<\beta}$ with $\beta<\alpha$.

\begin{lem} \label{lem:tails}
Let $\varnothing\ne Y\subset X$ be sets. Then a term $t\in\boT(X)$ is in the image of $\boT(Y)$ if an only if, for any $M\in\boT\modl$, the map $t_M\:M^X\rarrow M$ depends only on the components indexed by $Y$.
\end{lem}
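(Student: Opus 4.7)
The plan is to treat the two directions separately, using formula~\eqref{eq:term} for the ``only if'' direction and the free $\boT$-module $\boT(X)$ as a universal test object for the ``if'' direction.

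For the ``only if'' direction, let $i\:Y\hookrightarrow X$ denote the inclusion and suppose $t = \boT(i)(s)$ for some $s\in\boT(Y)$. For any $\boT$-module $\phi\:\boT(M)\rarrow M$ and any $m\in M^X$, viewed as a map $m\:X\rarrow M$, functoriality of $\boT$ together with~\eqref{eq:term} yields $t_M(m) = \phi\circ\boT(m)(t) = \phi\circ\boT(m\circ i)(s) = s_M(m\circ i)$, which depends only on the components of $m$ indexed by $Y$.

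For the converse, I would apply the hypothesis to the free $\boT$-module $M = \boT(X)$ with structure map $\mu_X$. Since $Y$ is nonempty, fix $y_0\in Y$ and define a retraction $r\:X\rarrow Y$ of $i$ by $r|_Y = \id_Y$ and $r(x)=y_0$ for $x\in X\setminus Y$. The two maps $\varepsilon_X\:X\rarrow\boT(X)$ and $\varepsilon_X\circ i\circ r\:X\rarrow\boT(X)$ then agree on $Y$, so by the assumed $Y$-dependence of $t_{\boT(X)}$ we obtain $t_{\boT(X)}(\varepsilon_X) = t_{\boT(X)}(\varepsilon_X\circ i\circ r)$. Unitality of the monad gives that the left-hand side equals $\mu_X\circ\boT(\varepsilon_X)(t) = t$, while functoriality followed by unitality identifies the right-hand side with $\mu_X\circ\boT(\varepsilon_X)\circ\boT(i)\circ\boT(r)(t) = \boT(i)\bigl(\boT(r)(t)\bigr)$. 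Hence $t$ lies in the image of $\boT(i)\:\boT(Y)\rarrow\boT(X)$, which is what it means for $t$ to belong to $\boT(Y)$ under the identification explained before the lemma.

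I do not anticipate any serious obstacle; the argument is a standard Yoneda-style trick on the free $\boT$-module. The only subtlety worth flagging is the role of the nonemptiness hypothesis $Y\ne\varnothing$, which is used precisely to ensure the existence of a retraction $r$ of $i$, so that one can compare $\varepsilon_X$ with a second map that factors through $Y$ yet coincides with $\varepsilon_X$ on $Y$.
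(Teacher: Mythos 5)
Your proof is correct and follows essentially the same strategy as the paper: the forward direction is the composition formula~\eqref{eq:term}, and the converse evaluates $t_{\boT(X)}$ on the free module $\boT(X)$ at the generic element $\varepsilon_X$ and at a second tuple agreeing with it on $Y$ but supported in $\boT(Y)$ (your $\varepsilon_X\circ i\circ r$ is a specific choice of the paper's $\Delta'$). The only cosmetic difference is that you conclude via the explicit identity $t=\boT(i)(\boT(r)(t))$ rather than via the commutative square expressing that the inclusion $\boT(Y)\subset\boT(X)$ is a $\boT$-module homomorphism.
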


\begin{proof}
The direct implication is clear from the discussion in Section~\ref{sec:prelim}.

For the converse, suppose that $t_M\:M^X\rarrow M$ depends only on the components from $Y$ and put $M=\boT(X)$ first. There is a canonical element $\Delta_X\in\boT(X)^X$ given by the monadic unit $\varepsilon_X\:X\rarrow\boT(X)$, and~\eqref{eq:term} together with the unitality axiom for $\boT$ implies that $t_{\boT(X)}\:\boT(X)^X\rarrow\boT(X)$ sends $\Delta_X$ to $t$.

Now choose $\Delta'\in\boT(Y)^X\subset \boT(X)^X$, which agrees with $\Delta$ on the components indexed by elements of $Y$ and such that $\Delta'_x\in\boT(Y)$ is arbitrary for $x\in X\setminus Y$. By our assumption, $t_{\boT(X)}(\Delta') = t_{\boT(X)}(\Delta) = t$. Since the inclusion $\boT(Y)\subset\boT(X)$ is a homomorphism of $\boT$-modules, we have the commutative diagram
\[
\xymatrix{
  \boT(Y)^X \ar[r]^-{t_{\boT(Y)}} \ar[d]_-{\mathsf{inc}} & \boT(Y) \ar[d]^-{\mathsf{inc}} \\
  \boT(X)^X \ar[r]^-{t_{\boT(X)}} & \boT(X)
}
\]
and $t = t_{\boT(Y)}(\Delta') \in \boT(Y)$.
\end{proof}

Combining the above considerations about the meaning of the defining conditions of (L1) and~(L2), we immediately obtain the following lemma.

\begin{lem} \label{lem:splitting}
The following are equivalent for a monad $\boT$ on $\Sets$ and an ordinal number~$\alpha$:
\begin{enumerate}
\item The map $\bigcap_{\beta<\alpha}\boT(\alpha\setminus\beta)\rarrow\boT(1)$, obtained by restricting $\boT(c)\:\boT(\alpha)\rarrow\boT(1)$ (where $c\:\alpha\rarrow1$ is the unique map in $\Sets$), is a regular epimorphism.
\item There exists a limit term $\lim_\alpha\in\boT(\alpha)$.
\end{enumerate}
\end{lem}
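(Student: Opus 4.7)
The plan is to rewrite each of conditions (L1) and (L2) as an explicit equation on the element $t\in\boT(\alpha)$, after which the equivalence will reduce to the fact that $\boT(1)$ is the free $\boT$-module on one generator. Condition (L1) is literally the membership $t\in\bigcap_{\beta<\alpha}\boT(\alpha\setminus\beta)$ by Lemma~\ref{lem:tails}. For (L2), I would first record the naturality formula $(\boT(f)(s))_M=s_M\circ M^f$ for a set map $f\colon X\rarrow Y$ and an element $s\in\boT(X)$, which is immediate from~\eqref{eq:term}. Applying it to $f=c\colon\alpha\rarrow 1$ and $s=t$, condition (L2)---which says exactly that $t_M\circ M^c=\id_M$ for every $M$---translates into $(\boT(c)(t))_M=\id_M$ for every $M\in\boT\modl$. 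A short unitality calculation identifies the unique element $u\in\boT(1)$ with $u_M=\id_M$ for all $M$ as the free generator $e:=\varepsilon_1(\ast)$: on the one hand $e_M=\id_M$ by the monad unit axiom, and conversely, if $u_{\boT(1)}=\id_{\boT(1)}$ then evaluating at $e$ gives $u=u_{\boT(1)}(e)=e$. Hence (L2) is equivalent to $\boT(c)(t)=e$.

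Having done this, I would note that $\bigcap_{\beta<\alpha}\boT(\alpha\setminus\beta)$ is a sub-$\boT$-module of $\boT(\alpha)$: each $\boT(\alpha\setminus\beta)\hookrightarrow\boT(\alpha)$ is the image of a $\boT$-module homomorphism (obtained by applying the functor $\boT$ to the underlying set inclusion), and intersections of sub-$\boT$-modules are sub-$\boT$-modules. The restriction $\rho$ of $\boT(c)$ to this intersection is therefore a $\boT$-module homomorphism into $\boT(1)$, and as recalled in Section~\ref{sec:prelim}, regular epimorphisms in $\boT\modl$ coincide with surjections. Consequently condition~(1) is precisely the statement that $\rho$ is surjective, while condition~(2) is the statement that $e$ lies in the image of $\rho$.

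To conclude, I would appeal to the fact that $\boT(1)$ is the free $\boT$-module on $e$: for any $s\in\boT(1)$ one has $s=s_{\boT(1)}(e)$ by the unitality of the monad, so every sub-$\boT$-module of $\boT(1)$ containing $e$ equals $\boT(1)$. Thus $e$ lying in the image of $\rho$ forces the image to be all of $\boT(1)$, and the equivalence of~(1) and~(2) follows. The only delicate step is the translation of (L2) into the equation $\boT(c)(t)=e$, since it requires combining the naturality identity for term operations with the monad unitality; everything else is routine bookkeeping in the monad formalism.
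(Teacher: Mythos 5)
Your proof is correct and follows essentially the same route as the paper's: both arguments rest on the freeness of $\boT(1)$ on the generator $e=\varepsilon_1(\ast)$, the identification of regular epimorphisms with surjections, and the translation of (L2) into the equation $\boT(c)(t)=e$ (which the paper phrases as ``$t\:\boT(1)\rarrow\boT(\alpha)$ is a section of $\boT(c)$''). Your element-wise formulation --- $e$ lies in the image of the restricted map iff that image, being a sub-$\boT$-module, is all of $\boT(1)$ --- is just an unpacking of the paper's remark that a regular epimorphism onto $\boT(1)$ splits by sending $e$ to any preimage.
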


\begin{proof}
Since $\boT(1)$ is free on one generator, regular epimorphisms ending at $\boT(1)$ are all retractions (we can send the standard free generator of $\boT(1)$ to any preimage of it to obtain a section). 
We already know that the existence of a term $t\in\boT(\alpha)$ satisfying (L2) is equivalent to the fact that $t\:\boT(1)\rarrow\boT(\alpha)$ is a section of $\boT(c)$. Now (L1) precisely says that the image of the section is included in $\bigcap_{\beta<\alpha}\boT(\alpha\setminus\beta)$.
\end{proof}

Now we can give the main result of the section.

\begin{thm} \label{thm:ab5*-via-limits}
Let $(\boT,\mu,\varepsilon)$ be a monad on $\Sets$. Then the following conditions are equivalent:

\begin{enumerate}
\item[(i)] Inverse limits of regular epimorphisms in $\boT\modl$ are regular epimorphisms.
\item[(ii)] For each ordinal number $\alpha$, there exists a limit term $\lim_\alpha\in\boT(\alpha)$.
\end{enumerate}
\end{thm}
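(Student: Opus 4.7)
Plan: The two implications admit rather different arguments, so I would treat them separately.

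For (i) $\Rightarrow$ (ii), the strategy is to apply the hypothesis to a tautological inverse system and then invoke Lemma~\ref{lem:splitting}. Fix an ordinal $\alpha > 0$. View each $\boT(\alpha\setminus\beta)$, for $\beta < \alpha$, as a $\boT$-module via $\mu$, and form the $\alpha$-indexed inverse system with these modules and bonding maps the natural set-inclusions $\boT(\alpha\setminus\beta') \hookrightarrow \boT(\alpha\setminus\beta)$ for $\beta' \geq \beta$. Map this into the constant inverse system with value $\boT(1)$ (identity bonding maps) via the maps $\boT(\alpha\setminus\beta) \twoheadrightarrow \boT(1)$ induced by the unique map $\alpha\setminus\beta \to 1$; each of these is surjective, hence a regular epimorphism. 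Because the forgetful functor $\boT\modl \to \Sets$ creates limits, the source limit is the set-theoretic intersection $\bigcap_{\beta<\alpha}\boT(\alpha\setminus\beta) \subseteq \boT(\alpha)$ and the target limit is $\boT(1)$. Hypothesis~(i) therefore yields a regular epimorphism $\bigcap_{\beta<\alpha}\boT(\alpha\setminus\beta) \twoheadrightarrow \boT(1)$, and Lemma~\ref{lem:splitting} then produces the required limit term.

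For (ii) $\Rightarrow$ (i), I use a limit term as a coherent averaging device. Let $(f_i \colon M_i \twoheadrightarrow N_i)_{i \in I}$ be an inverse system of regular epimorphisms over an upward directed poset $I$, with bonding maps $\pi^M_{i,i'}$ and $\pi^N_{i,i'}$ for $i \geq i'$, and let $n = (n_i) \in \lim_i N_i$. By the axiom of choice, pick an arbitrary preimage $m'_i \in f_i^{-1}(n_i)$ for each $i$, and fix a cofinal strictly increasing chain $(j_\beta)_{\beta<\alpha}$ in $I$ whose length $\alpha$ equals the cofinality of $I$ (a regular cardinal, by a standard fact about directed posets). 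For each $i \in I$, let $\beta_i$ be the least $\beta$ with $j_\beta \geq i$, so that $j_\beta \geq i$ for all $\beta \geq \beta_i$, and define
\[
m_i := (\lim_\alpha)_{M_i}\bigl(x^i_\beta\bigr)_{\beta<\alpha}, \qquad x^i_\beta := \pi^M_{j_\beta,\,i}\bigl(m'_{j_\beta}\bigr) \ \text{for}\ \beta \geq \beta_i,
\]
with $x^i_\beta$ chosen arbitrarily for $\beta < \beta_i$; by (L1) this choice is immaterial. Applying $f_i$ termwise gives $f_i(m_i) = (\lim_\alpha)(f_i(x^i_\beta))$, and since $f_i(x^i_\beta) = \pi^N_{j_\beta, i}(n_{j_\beta}) = n_i$ for all $\beta \geq \beta_i$, conditions (L1) and (L2) together yield $f_i(m_i) = n_i$. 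Similarly, for $i \geq i'$ one has $\pi^M_{i,i'}(x^i_\beta) = x^{i'}_\beta$ for all $\beta \geq \beta_i$, so another invocation of (L1) gives $\pi^M_{i,i'}(m_i) = m_{i'}$. Hence $(m_i)_{i\in I} \in \lim_i M_i$ is the sought lift of~$n$.

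The principal obstacle, as I see it, lies in the bookkeeping for (ii) $\Rightarrow$ (i): a generic directed indexing poset carries no canonical ordinal enumeration, so one must first isolate a cofinal well-ordered chain of regular-cardinal length, coordinate the thresholds $\beta_i$ across different $i \in I$, and lean repeatedly on (L1) to absorb the incidentally-chosen initial segments whenever a compatibility check is made. Beyond this orchestration, the argument is a formal consequence of (L1), (L2), and the $\boT$-linearity of the structure and bonding maps; the reverse direction (i) $\Rightarrow$ (ii) is comparatively immediate once the correct tautological inverse system is identified.
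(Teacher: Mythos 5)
Your implication (i)~$\Rightarrow$~(ii) is precisely the paper's argument: apply the hypothesis to the inverse system $\boT(\alpha)\hookleftarrow\boT(\alpha\setminus1)\hookleftarrow\cdots$ mapping onto the constant system $\boT(1)$, use that the forgetful functor creates limits to identify the limit with $\bigcap_{\beta<\alpha}\boT(\alpha\setminus\beta)$, and invoke Lemma~\ref{lem:splitting}. That half is correct.

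The implication (ii)~$\Rightarrow$~(i) has a genuine gap at its very first step: you assert, as a ``standard fact,'' that a directed poset $I$ admits a cofinal strictly increasing well-ordered chain $(j_\beta)_{\beta<\alpha}$. This is false in general. For instance, the poset of finite subsets of $\omega_1$ ordered by inclusion is directed, but any chain in it embeds order-preservingly into $\omega$ via the cardinality function, hence is countable with countable union, whereas a cofinal subset must have union $\omega_1$; so no cofinal chain exists. Consequently your construction, which indexes the lifting data by such a chain and uses (L1) to absorb the initial segments below the thresholds $\beta_i$, only proves the statement for inverse systems indexed by $\alpha^\sop$ for an ordinal $\alpha$ (or by posets that happen to possess cofinal chains). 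For those systems your element-level lifting is correct, and it is essentially a concrete unwinding of the paper's functorial retraction $\prod_{\beta<\alpha}M_\beta\rarrow\lim_{\beta<\alpha}M_\beta$ obtained by applying $\Hom_{(\boT\modl)^{\alpha^\sop}}({-},(M_\beta))$ to the section \eqref{eq:diag-split}. What is missing is the reduction the paper performs before anything else: by \cite[Lemma~1.6]{AR} and transfinite induction --- writing an arbitrary directed poset as a smooth well-ordered union of directed subposets of smaller cardinality --- general inverse limits of regular epimorphisms are regular epimorphisms as soon as well-ordered ones are. With that reduction inserted at the start of (ii)~$\Rightarrow$~(i), your argument closes; without it, the proof does not cover arbitrary inverse limits.
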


\begin{proof}
First of all, it follows by transfinite induction from~\cite[Lemma 1.6]{AR} that we can equivalently replace (i) by the condition:
\begin{enumerate}
\item[(i')] Well-ordered inverse limits (i.e.\ those indexed by $\alpha^\sop$, where $\alpha$ is an ordinal number) of regular epimorphisms in $\boT\modl$ are regular epimorphisms.
\end{enumerate}

First suppose that (i') holds and consider, for an ordinal number $\alpha$, the inverse system of regular epimorphisms
\begin{equation} \label{eq:key-diag}
\vcenter{
\xymatrix@C=1.6em{
  \boT(\alpha) \ar@{->>}[d]_-{\boT(c)} & \boT(\alpha\setminus 1) \ar[l]_-{\supset} \ar@{->>}[d]_-{\boT(c)} & \boT(\alpha\setminus 2) \ar[l]_-{\supset} \ar@{->>}[d]_-{\boT(c)} & \boT(\alpha\setminus 3) \ar[l]_-{\supset} \ar@{->>}[d]_-{\boT(c)} & \cdots \ar[l]_-{\supset} & \boT(\alpha\setminus\beta) \ar[l]_-{\supset} \ar@{->>}[d]_-{\boT(c)} & \cdots \ar[l]_-{\supset} \\
  \boT(1) & \boT(1) \ar[l]_-{=} & \boT(1) \ar[l]_-{=} & \boT(1) \ar[l]_-{=} & \cdots \ar[l]_-{=} & \boT(1) \ar[l]_-{=} & \cdots \ar[l]_-{=} \\
}
}
\end{equation}
By assumption, the limit map
\begin{equation} \label{eq:critical-map}
\boT(c)\: \bigcap_{\beta<\alpha} \boT(\alpha\setminus\beta) \lrarrow \boT(1)
\end{equation}
is a regular epimorphism, hence a limit term $\lim_\alpha\in\boT(\alpha)$ exists by Lemma~\ref{lem:splitting}.

Suppose conversely that (ii) holds, i.e.\ limit terms $\lim_\alpha$ exist for all ordinal numbers~$\alpha$. By Lemma~\ref{lem:splitting}, this is equivalent to the fact that \eqref{eq:critical-map} is a regular epimorphism for each $\alpha$. Yet equivalently, the following map in the diagram category $(\boT\modl)^{\alpha^\sop}$ 
\begin{equation} \label{eq:diag-split}
\vcenter{
\xymatrix@C=1.6em{
	\boT(\alpha) & \boT(\alpha\setminus 1) \ar[l]_-{\supset} & \boT(\alpha\setminus 2) \ar[l]_-{\supset} & \boT(\alpha\setminus 3) \ar[l]_-{\supset}  & \cdots \ar[l]_-{\supset} & \boT(\alpha\setminus\beta) \ar[l]_-{\supset} & \cdots \ar[l]_-{\supset} \\
	\boT(1) \ar[u]^-{\lim_\alpha} & \boT(1) \ar[l]_-{=} \ar[u]^-{\lim_\alpha} & \boT(1) \ar[l]_-{=} \ar[u]^-{\lim_\alpha} & \boT(1) \ar[l]_-{=} \ar[u]^-{\lim_\alpha} & \cdots \ar[l]_-{=} & \boT(1) \ar[l]_-{=} \ar[u]^-{\lim_\alpha} & \cdots \ar[l]_-{=} \\
}
}
\end{equation}
is a section to~\eqref{eq:key-diag}. In particular, \eqref{eq:key-diag} is a retraction in $(\boT\modl)^{\alpha^\sop}$.

Given $\beta<\alpha$ and $M\in\boT\modl$, let us denote by $\beta_!M\in(\boT\modl)^{\alpha^\sop}$ the diagram where $(\beta_!M)_\gamma=M$ for all $\gamma\le\beta$, $(\beta_!M)=\boT(0)$ is the initial object in $\boT\modl$ for $\gamma>\beta$, and the maps $(\beta_!M)_\gamma\larrow(\beta_!M)_\delta$ for $\gamma<\delta<\alpha$ are $\id_M$ or the unique maps from the initial $\boT$-module. It follows that the upper row is none other than the coproduct $\coprod_{\beta<\alpha}\beta_!\boT(1)$. On the other hand, there are obvious morphisms $\beta_!M \rarrow \beta'_!M$ for $\beta<\beta'<\alpha$ and the lower row is none other than $\colim_{\beta<\alpha}\beta_!\boT(1)$. Thus, \eqref{eq:key-diag} is simply the canonical morphism
\[ \coprod_{\beta<\alpha}\beta_!\boT(1) \lrarrow \colim_{\beta<\alpha}\beta_!\boT(1), \]
and, assuming (ii), it is a retraction.

If we now apply $\Hom_{(\boT\modl)^{\alpha^\sop}}(-,(M_\beta)_{\beta<\alpha})$ to an arbitrary diagram $(M_\beta)_{\beta<\alpha}\in(\boT\modl)^{\alpha^\sop}$, one easily checks that we obtain the underlying map in $\Sets$ of the canonical map $\lim_{\beta<\alpha} M_\beta \rarrow \prod_{\beta<\alpha} M_\beta$ between the $\boT$-modules. Assuming (ii), this map is a section of sets and $\Hom_{(\boT\modl)^{\alpha^\sop}}(-,(M_\beta)_{\beta<\alpha})$ applied to \eqref{eq:diag-split} induces a retraction (equivalently, a surjective map) of sets
\[
\prod_{\alpha<\beta} M_\beta \lrarrow \lim_{\alpha<\beta} M_\beta,
\]
which is functorial in the diagram $(M_\beta)_{\beta<\alpha}$. 

Since a product of surjective morphisms in $\Sets$ (and hence also in $\boT\modl$) is surjective, and $\lim_{\beta<\alpha}$ is a retract of $\prod_{\beta<\alpha}$ at the level of underlying sets, we get~(i').
\end{proof}

\begin{rem}
 Notice that a $\lambda$\+accessible monad (that is, a monad $\boT$ that, as
a functor $\Sets\rarrow\Sets$, preserves $\lambda$\+filtered colimits for some
regular cardinal~$\lambda$) cannot have limit terms for ordinals~$\alpha$ of
the cofinality greater or equal to~$\lambda$.
 So the monad in Theorem~\ref{thm:ab5*-via-limits} is never accessible.
 Our results thus demonstrate the usefulness of nonaccessible monads on
the category of sets (or, in other words, algebraic theories of unbounded arity
in the language of~\cite{Wr}, \cite{PR}).
\end{rem}

\begin{exs} \label{exs:compact-hausdorff}
 (1)~In any locally finitely presentable category direct limits preserve
regular monomorphisms~\cite[Corollary~1.60]{AR}.
 Hence in any locally finitely copresentable category inverse limits of
regular epimorphisms are regular epimorphisms.

 (2)~It is clear from the discussion in Section~\ref{sec:prelim} that
any category opposite to a Grothendieck abelian category $\A$ is
equivalent to the category of modules over a monad $\boT$ on~$\Sets$
(because any Grothendieck abelian category has products and
an injective cogenerator).
 Any choice of an injective cogenerator $J\in\A$ provides such a monad
$\boT$ and an equivalence $\A^\sop\simeq\boT\modl$, and vice versa.
 Since inverse limits of (regular) epimorphisms are (regular)
epimorphisms in $\A^\sop$, the monad $\boT$ satisfies the equivalent
conditions of Theorem~\ref{thm:ab5*-via-limits}.
 The discussion of additive monads on $\Sets$ satisfying these
conditions will be continued in the next Section~\ref{sec:sums}.

 (3)~In particular, the category $\B$ of compact Hausdorff abelian groups
is equivalent to the opposite category to the category $\A$ of abelian
groups (the equivalence being provided by the Pontryagin duality
functor $\Hom_\A({-},\boR/\boZ)\:\A^\sop\rarrow\B$).
 Hence the category of compact Hausdorff abelian groups is monadic
over $\Sets$; a compact Hausdorff abelian group is the same thing as
a module over the monad $\boT$ on $\Sets$ assigning to a set $X$
the set of all (discontinuous) abelian group homomorphisms
$(\boR/\boZ)^X\rarrow\boR/\boZ$. Indeed, $J=\boR/\boZ$ is an injective
cogenerator of $\A$, so that $P=\Hom_\A(J,\boR/\boZ)$ is
a projective generator of $\B$ and we have
\[ \boT(X) = \Hom_\B(P,P^{(X)}) = \Hom_\A(J^X,J). \]
 According to~(2), this monad satisfies the equivalent conditions of
Theorem~\ref{thm:ab5*-via-limits}.

 (4) Similar considerations apply to the correspondence between locally
finite Grothendieck categories and categories of pseudocompact modules
\cite[\S IV.3]{Gab}. Let $R$ be a complete separated topological ring
with a basis of neighborhoods of $0$ formed by left ideals $I$ such that
$R/I$ is a module of finite length and consider the category $\B$ of
complete separated topological left $R$-modules $M$ with a basis
of submodules $N$ such that $M/N$ has finite length. Then $\A=\B^\sop$ is
a locally finite Grothendieck category with injective cogenerator $J=R^\sop$
and every locally finite Grothendieck category arises in this way
(see also \cite[Corollaire 7.2]{Jen} for additional details). In particular,
$\B$ is monadic and satisfies the equivalent conditions of
Theorem~\ref{thm:ab5*-via-limits}.

As a particular example, let $R=k$ be a field with the discrete topology. Then
the category $\B$ is the category of complete separated topological vector
spaces with a basis of neighborhoods of 0 formed by subspaces of finite codimension.
The corresponding monad is given by $\boT(X) = \Hom_k(k^X,k)$.

 (5)~The category of compact Hausdorff topological spaces is
locally copresentable, being equivalent to the opposite category to
the category of commutative unital $C^*$\+algebras (but it is not
locally finitely copresentable).
 The category of compact Hausdorff topological spaces is also monadic
over $\Sets$; a compact Hausdorff topological space is the same thing
as a module over the monad $\boT$ on $\Sets$ assigning to a set $X$
the underlying set of the Stone--\v Cech compactification of $X$,
that is, the set of all ultrafilters on~$X$
\cite[Example~20.36(1)]{AHS}.
 Using Tychonoff's product compactness theorem, one easily shows that
inverse limits of regular epimorphisms (\,$=$~surjective continuous
maps) of compact Hausdorff topological spaces are regular epimorphisms.
 Thus the monad $\boT$ satisfies the equivalent conditions of
Theorem~\ref{thm:ab5*-via-limits}.
\end{exs}

\bigskip
\section{Infinite summation versus limit terms} \label{sec:sums}
\medskip

In the last section, we focus on abelian categories $\B$ with coproducts and a projective generator $P$, or equivalently on the module categories of additive monads $\boT$ on sets (in particular, we can assume $\B=\boT\modl$ and $P=\boT(1)$). In that case, we will prove that  $\B$ satisfies Grothendieck's condition Ab5* if and only if for any set $X$, the canonical homomorphism $P^{(X)} \rarrow P^X$ is surjective. In the opposite category $\A=\B^\sop$, it will follow that the Ab5 property is equivalent to a condition on the object $J=P^\sop\in\A$ which in ordinary module categories characterizes the pure-injectivity of $J$ (see \cite[Theorem 7.1(vi)]{JL}).

\smallskip

An additive monad $\boT$ is characterized by the fact that it contains terms $+\in\boT(2)$, $-\in\boT(1)$ and $0\in\boT(0)$ which satisfy the usual axioms for abelian groups and commute with all other terms (see \cite[\S1]{Pper} for details). By the monadic composition, we obtain for every natural number $n$ a term $\sum_n\in\boT(n)$ which is, for any $\boT$-module $M$, given by $\left(\sum_n\right)_M(m_0,m_1,\dots,m_{n-1}) = m_0+m_1+\cdots+m_{n-1}$.

As we defined limit terms in the last section, we can, for additive monads, also formally define terms of infinite summation. Given a set $X$, a term $t\in\boT(X)$ is a \emph{summation term} if for each $\boT$-module $M$ and a family of elements $(m_x)_{x\in X}\in M^X$ with only finitely many (pairwise distinct) indices $x_0$, $x_1$,~\dots, $x_{n-1}\in X$ for which $m_{x_i}\ne 0_M$, we have $t_M(m_x)_{x\in X} = m_{x_0}+m_{x_1}+\cdots+m_{x_{n-1}}$.

Summation terms can be also characterized syntactically.

\begin{lem}\label{lem:sum-syntax}
The following are equivalent for $t\in\boT(X)$, where $\boT$ is an additive monad on $\Sets$:
\begin{enumerate}
\item The image of $t$ under the canonical $\boT$-module homomorphism $\boT(X)=\boT(1)^{(X)}\rarrow\boT(1)^X$ is a constant family $(m_x)_{x\in X}$, where $m_x$ is the standard free generator of $\boT(1)$ for each $x\in X$.
\item $t$ is a summation term.
\end{enumerate}
\end{lem}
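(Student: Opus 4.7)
The plan is to first unwind condition~(1) into an explicit statement about the value of $t_{\boT(1)}$ on certain indicator families in $\boT(1)^X$, and then to use the additive structure of $\boT$ to pass from these to arbitrary finitely supported families in an arbitrary $\boT$-module $M$. Throughout, write $m\in\boT(1)$ for the standard free generator, and for each $y\in X$ and each element $a$ of a $\boT$-module $M$ let $f^y(a)\in M^X$ denote the family with entry $a$ at position $y$ and $0$ in all other positions. By freeness of $\boT(X)=\boT(1)^{(X)}$ on the generators $\varepsilon_X(x)$, the canonical morphism $\Phi\:\boT(X)\rarrow\boT(1)^X$ is characterized by $\Phi(\varepsilon_X(x))=f^x(m)$; writing $t = t_{\boT(X)}((\varepsilon_X(x))_{x\in X})$ via~\eqref{eq:term} and applying the $\boT$-module homomorphism $\Phi$ yields
\[
\Phi(t) = t_{\boT(1)^X}\bigl((f^x(m))_{x\in X}\bigr),
\]
whose $y$-th coordinate (computed by postcomposing with the $\boT$-module projection $\boT(1)^X\rarrow\boT(1)$) equals $t_{\boT(1)}(f^y(m))$. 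Hence condition~(1) is equivalent to the assertion that $t_{\boT(1)}(f^y(m))=m$ for every $y\in X$.

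The implication (2)$\Rightarrow$(1) is then immediate: applying the summation property of $t$ to the family $f^y(m)\in\boT(1)^X$, whose only nonzero entry is $m$ at position~$y$, yields $t_{\boT(1)}(f^y(m))=m$.

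For the converse (1)$\Rightarrow$(2), the additional ingredient is that $\boT$ is additive, so that $t_M\:M^X\rarrow M$ commutes with $+$ and $0$ and is therefore an abelian group homomorphism for the componentwise structure on $M^X$. Given $(m_x)_{x\in X}\in M^X$ with nonzero entries exactly at $x_0,\dots,x_{n-1}$, the componentwise decomposition $(m_x)_{x\in X} = \sum_{i=0}^{n-1} f^{x_i}(m_{x_i})$ together with additivity of $t_M$ reduces the required identity $t_M((m_x)_{x\in X}) = m_{x_0}+\cdots+m_{x_{n-1}}$ to the single-entry statement $t_M(f^y(a))=a$ for all $M$, $y\in X$, and $a\in M$. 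To deduce this from~(1), let $h\:\boT(1)\rarrow M$ be the unique $\boT$-module homomorphism with $h(m)=a$; since $h(0)=0$, the induced componentwise map $\boT(1)^X\rarrow M^X$ sends $f^y(m)$ to $f^y(a)$, and naturality of term operations gives $t_M(f^y(a))=h(t_{\boT(1)}(f^y(m)))=h(m)=a$, the last equality by~(1).

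The only delicate point is correctly identifying the $y$-th coordinate of $\Phi(t)$ as $t_{\boT(1)}(f^y(m))$, which rests on two applications of naturality for term operations combined with the freeness of $\boT(X)$. Once this translation is in place, the componentwise additivity of $t_M$ and the freeness of $\boT(1)$ make the remainder of the argument routine.
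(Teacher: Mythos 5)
Your proof is correct and follows essentially the same route as the paper's: both translate condition~(1) into the statement that $t$ evaluates each single-entry family to its unique nonzero entry, and then invoke additivity of $t_M$ to handle arbitrary finitely supported families. The only difference is presentational --- the paper applies $\Hom_{\boT\modl}({-},M)$ to the splitting $p_x\circ t=\id_{\boT(1)}$, whereas you push forward along the homomorphism $\boT(1)\rarrow M$ classifying $a$; these are two phrasings of the same freeness/Yoneda argument.
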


\begin{proof}
For each $x\in X$, denote by $p_x\:\boT(X)\rarrow\boT(1)$ the composition of the canonical map from (1) with the product projection to the component indexed by~$x$. This map sends the basis element $e_x$ of $\boT(X)$ corresponding to~$x$ to the basis element of $\boT(1)$ and all other basis elements $e_y$, $y\in X$, $y\ne x$ to $0$, and assertion (1) is clearly equivalent to

\begin{enumerate}
\item[(1')] The map $p_x$ is a retraction of $t\:\boT(1)\rarrow\boT(X)$ for every $x\in X$.
\end{enumerate}

If (1') holds and $M\in\boT\modl$, then $\Hom_{\boT\modl}(p_x,M)\:M\rarrow M^X$ is a section of $t_M\:M^X\rarrow M$ and sends $m\in M$ to $(m_y)_{y\in X}$ such that $m_y=0$ for $y\ne x$ and $m_x=m$. In particular $t_M(m_y)_{y\in X}=m$ and, since $t_M$ commutes with + by the characterization of additive monads, $t$ is a summation term.
If, conversely, $t$ is a summation term, then the inclusion $\iota_x\:\boT(1)\rarrow\boT(1)^X$ to the component~$x$ composes with $t_{\boT(1)}\:\boT(1)^X\rarrow\boT(1)$ to $\id_{\boT(1)}$. However, this composition is obtained from the composition $p_x\circ t$ by the application of $\Hom_{\boT\modl}(-,\boT(1))$, and (1') follows.
\end{proof}

In particular, let $X=\alpha$ be an ordinal.
We will denote any fixed infinite summation term by $\sum_\alpha\in\boT(\alpha)$ (this is consistent with the above notation for natural numbers) and we will write $\sum_{\beta<\alpha} m_\beta$ for the value of $\left(\sum_\alpha\right)_M\:M^\alpha\rarrow M$ on a sequence $(m_\beta)_{\beta<\alpha}\in M^\alpha$. More generally, we will write $\sum_{\beta\le\gamma<\alpha} m_\gamma$ for the value of $\sum_\alpha$ on a sequence $(m_\gamma)_{\gamma<\alpha}\in M^\alpha$ with $m_\gamma=0$ for all $\gamma<\beta$.

Note that if $\beta<\alpha$ are ordinal numbers and $\sum_\alpha\in\boT(\alpha)$, we can define a term $\sum_\alpha\!|_\beta\in\boT(\beta)$ such that on each $M\in\boT\modl$, $\left(\sum_\alpha\!|_\beta\right)_M\: M^\beta\rarrow M$ acts via the composition
\[ M^\beta \overset{\iota}\lrarrow M^\alpha \overset{\sum_\alpha}\lrarrow M, \]
where $\iota\:M^\beta\rarrow M^\alpha$ is the inclusion to the components below $\beta$. Clearly, $\sum_\alpha\!|_\beta$ is a summation term for $\beta$. Hence, if we fix $\alpha$ and $\sum_\alpha$ exists, we can always choose summation terms for all $\beta<\alpha$ so that we have $\sum_\beta = \sum_\alpha\!|_\beta$ for each $\beta<\alpha$. In this case, we say that the infinite summations are \emph{compatible below~$\alpha$}. 

A key observation for our application is that for additive monads, the existence of summation terms is equivalent to the existence of limit terms.

\begin{prop} \label{prop:sums-vs-lim}
Let $\boT$ be an additive monad and $\alpha$ an ordinal number. Then the following assertions are equivalent:

\begin{enumerate}
\item[(i)] a summation term $\sum_\alpha\in\boT(\alpha)$ exists,
\item[(ii)] limit terms $\lim_\beta\in\boT(\beta)$ exists for all $\beta\le\alpha$.
\end{enumerate}
\end{prop}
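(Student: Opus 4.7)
My plan is to prove the equivalence by transfinite induction on $\alpha$, establishing both directions at each step and using the compatibility of summation terms recalled in the paragraphs just before the proposition.

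For the direction (ii) $\Rightarrow$ (i), the inductive hypothesis applied at each $\beta<\alpha$ yields compatible summation terms $\sum_\beta\in\boT(\beta)$. Given a sequence $(m_\gamma)_{\gamma<\alpha}$ in a $\boT$-module $M$, form the sequence of partial sums $s_\beta:=\sum_\beta((m_\gamma)_{\gamma<\beta})\in M$ indexed by $\beta<\alpha$. If $(m_\gamma)$ has finite support, then $(s_\beta)_{\beta<\alpha}$ is eventually constant with eventual value equal to the finite total sum. Since $\lim_\alpha$ recovers the eventual value on any eventually constant sequence (combining (L1), which lets one truncate any initial segment, with (L2) applied to the tail), the expression $\sum_\alpha(m_\gamma):=\lim_\alpha((s_\beta)_{\beta<\alpha})$, realized in $\boT(\alpha)$ by monadic composition of $\lim_\alpha$ with the family $(\sum_\beta)_{\beta<\alpha}$, is a summation term.

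For the direction (i) $\Rightarrow$ (ii), compatibility immediately yields $\sum_\beta\in\boT(\beta)$ for every $\beta<\alpha$, and by the inductive hypothesis $\lim_\gamma$ exists for every $\gamma<\alpha$. It remains to construct $\lim_\alpha$. When $\alpha$ is a successor the last-coordinate projection does the job, and when $\alpha$ is a singular limit ordinal (i.e.\ $\mathrm{cf}(\alpha)<\alpha$) I would pick a cofinal chain $(\beta_i)_{i<\mathrm{cf}(\alpha)}\subset\alpha$ and set $\lim_\alpha:=\lim_{\mathrm{cf}(\alpha)}\bigl((m_{\beta_i})_{i<\mathrm{cf}(\alpha)}\bigr)$ via monadic composition with the inductively available $\lim_{\mathrm{cf}(\alpha)}$; both (L1) and (L2) transfer directly from the corresponding properties of $\lim_{\mathrm{cf}(\alpha)}$, using that cofinal tails in $\mathrm{cf}(\alpha)$ map to cofinal tails in $\alpha$.

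The remaining case, and the real obstacle, is that of $\alpha$ an infinite regular cardinal. Here the candidate is the telescoping formula
\[
  \lim_\alpha(m_\gamma)_{\gamma<\alpha}\;:=\;m_0+\sum\nolimits_\alpha\bigl((m_{\gamma+1}-m_\gamma)_{\gamma<\alpha}\bigr),
\]
constructed as an element of $\boT(\alpha)$ via the additive structure of $\boT$ and monadic composition with $\sum_\alpha$. Condition (L2) is immediate since the difference sequence of a constant sequence vanishes. For (L1) one must show that the above element equals $m_\beta+\sum_\alpha\bigl((m_{\gamma+1}-m_\gamma)_{\beta\le\gamma<\alpha}\bigr)$ as elements of $\boT(\alpha)$, for each $\beta<\alpha$; this reduces to the transfinite telescoping identity $\sum_\beta\bigl((m_{\gamma+1}-m_\gamma)_{\gamma<\beta}\bigr)=m_\beta-m_0$. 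For finite $\beta$ it is formal (using only that $\sum_\beta$ is a summation term in the finite-support sense), but for infinite $\beta<\alpha$ it must be justified by combining the compatibility $\sum_\alpha|_\beta=\sum_\beta$ with the inductively available $\lim_\beta$ in order to upgrade finite telescoping to transfinite telescoping. This interplay between $\sum_\alpha$ and the previously constructed limit terms for smaller ordinals is where I expect the main work of the proof to lie.
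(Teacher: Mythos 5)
Your direction (ii)~$\Rightarrow$~(i) is essentially the paper's argument (the partial sums of a finitely supported family are eventually constant, and a limit term recovers the eventual value), up to one slip: when $\alpha=\delta+1$ is a successor, no partial sum $s_\beta=\sum_{\gamma<\beta}m_\gamma$ with $\beta<\alpha$ ever involves $m_\delta$, so $\lim_\alpha\bigl((s_\beta)_{\beta<\alpha}\bigr)$ misses the last summand; successors must be treated separately via $\sum_{\gamma<\delta+1}m_\gamma:=\bigl(\sum_{\gamma<\delta}m_\gamma\bigr)+m_\delta$, as the paper does. The genuine problem is in (i)~$\Rightarrow$~(ii), precisely in the step you flag as the main work. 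The telescoping identity $\sum_{\gamma<\beta}\bigl(m_{\gamma+1}-m_\gamma\bigr)=m_\beta-m_0$ is not merely hard to justify for infinite $\beta$ --- it is false: for a limit ordinal $\beta$ the left-hand side is a term in the variables $m_\delta$ with $\delta<\beta$ (since $\gamma<\beta$ implies $\gamma+1<\beta$), while the right-hand side involves $m_\beta$. Consequently your candidate $\lim_\alpha(m):=m_0+\sum_{\gamma<\alpha}(m_{\gamma+1}-m_\gamma)$ violates (L1) for every $\alpha>\omega$. Concretely, take $m_n=m$ for $n<\omega$ and $m_\gamma=0$ for $\gamma\ge\omega$: every successor difference vanishes, so your formula returns $m_0+0=m$, whereas the identically zero family returns $0$; since the two families agree on all coordinates $\ge\omega$, (L1) at $\beta=\omega$ would force $m=0$ for every $m$ in every $\boT$-module, i.e.\ $\boT$ trivial. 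The underlying reason is that the partial sums of the successor differences up to a limit stage $\beta$ can only recover something like ``$\lim_{\gamma<\beta}m_\gamma$'', never $m_\beta$ itself.

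The paper's construction repairs exactly this defect by inserting correction terms at limit stages: one defines, by transfinite recursion on $\beta\le\alpha$,
\[
\lim_{\gamma<\beta}m_\gamma\;:=\;\sum_{\gamma<\beta}\bigl(m_\gamma-\lim_{\delta<\gamma}m_\delta\bigr),
\]
where the $\gamma$-th difference is taken against the previously constructed limit term rather than against $m_{\gamma'}$ for the predecessor $\gamma'$ (these agree when $\gamma$ is a successor, but at limit $\gamma$ the term $m_\gamma-\lim_{\delta<\gamma}m_\delta$ is a genuinely new, generally nonzero contribution). Splitting the sum at any $\gamma<\beta$ --- using that terms of an additive monad commute with $+$ and that the summation terms are compatible below $\alpha$ --- then gives $\lim_{\delta<\beta}m_\delta=\lim_{\delta<\gamma}m_\delta+\sum_{\gamma\le\delta<\beta}\bigl(m_\delta-\lim_{\zeta<\delta}m_\zeta\bigr)$, from which (L1) and (L2) follow inductively via Lemma~\ref{lem:tails}. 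This uniform recursion also makes your successor/singular/regular case distinction unnecessary (your reduction of singular limit ordinals to their cofinality is correct, but only the regular case was ever the issue, and that is the case your formula cannot handle).
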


\begin{proof}
Starting from (i), we assume by the preceding discussion that summation terms $\sum_\beta$ exist and are compatible below $\alpha$.
We will construct the limit terms $\lim_\beta$, $\beta\le\alpha$ inductively. We formally put $\lim_0 = 0 \in \boT(0)$ and if the limit terms have been defined for all $\gamma<\beta$ for some $\beta>0$, we define $\lim_\beta\in\boT(\beta)$ so that for each $M\in\boT\modl$ and $(m_\gamma)_{\gamma<\beta}\in M^\beta$ we have
\[ \lim_{\gamma<\beta} m_\gamma = \sum_{\gamma<\beta} \bigl(m_\gamma - \lim_{\delta<\gamma} m_\delta\bigr). \]
%
%Note that if $\beta>0$ is a finite ordinal, the construction immediately implies that $\lim_{n<\beta} m_n = m_{\beta-1}$, and for $\beta=\omega$, we have $\lim_{n<\omega} m_n = \sum_{n<\omega} (m_n-m_{n-1})$, where $m_{-1} = 0$ by convention.

Along with the construction, we need to prove that $\lim_\beta$ satisfies (L1) and (L2) from the previous section. To this end, observe that if $\gamma<\beta\le\alpha$, then
\begin{align*}
\lim_{\delta<\beta} m_\delta &= \sum_{\delta<\beta}\bigl(m_\delta - \lim_{\zeta<\delta} m_\zeta\bigr) \\
&= \left(\sum_{\delta<\gamma}\bigl(m_\delta - \lim_{\zeta<\delta} m_\zeta\bigr)\right) + \left(\sum_{\gamma\le\delta<\beta}\bigl(m_\delta - \lim_{\zeta<\delta} m_\zeta\bigr)\right) \\
&= \lim_{\delta<\gamma} m_\delta + \left(\sum_{\gamma\le\delta<\beta}\bigl(m_\delta - \lim_{\zeta<\delta} m_\zeta\bigr)\right),
\end{align*}
where the second equality uses the compatibility of the summations below $\alpha$.

If $\beta=\gamma+1$ is an ordinal successor, then the latter computation specializes to $\lim_{\delta<\beta} m_\delta = \lim_{\delta<\gamma} m_\delta + (m_\gamma - \lim_{\zeta<\gamma} m_\zeta) = m_\gamma$. Using Lemma~\ref{lem:tails}, this clearly implies (L1) and (L2) for $\lim_\beta$.

Suppose on the other hand that $\beta$ is a limit ordinal. If $\gamma<\beta$, then also $\gamma+1<\beta$ and we have
\begin{align*}
\lim_{\delta<\beta} m_\delta
&= \lim_{\delta<\gamma+1} m_\delta + \left(\sum_{\gamma+1\le\delta<\beta}\bigl(m_\delta - \lim_{\zeta<\delta} m_\zeta\bigr)\right) \\
&= m_\gamma + \left(\sum_{\gamma+1\le\delta<\beta}\bigl(m_\delta - \lim_{\zeta<\delta} m_\zeta\bigr)\right).
\end{align*}
Then Lemma~\ref{lem:tails} and the inductive hypothesis on the $\lim_\delta$, $\gamma+1\le\delta<\beta$, implies that $\lim_\beta\in\boT(\beta\setminus\gamma)$. This implies (L1) for $\lim_\beta$. To show (L2), suppose that $(m_\gamma)_{\gamma<\beta}$ is a constant sequence in $M^\beta$. Since $m_\gamma-\lim_{\delta<\gamma}m_\delta = 0$ for all $0<\gamma<\beta$ by the inductive hypothesis, we obtain $\lim_{\gamma<\beta} m_\gamma = \sum_{\gamma<\beta} (m_\gamma - \lim_{\delta<\gamma} m_\delta) = m_0$, as required.

Let us conversely start from (ii). We will construct $\sum_\beta$ by induction on $\beta\le\alpha$. We again start with $\sum_0=0\in\boT(0)$ and if $\beta=\gamma+1$ is an ordinal successor, we put $\sum_{\delta<\beta} m_\delta = \bigl(\sum_{\delta<\gamma} m_\delta\bigr) + m_\gamma$. The latter is clearly a summation term provided that $\sum_\gamma$ is. For limit ordinals, we use the formula
\[
\sum_{\gamma<\beta} m_\gamma = \lim_{\gamma<\beta} \left(\sum_{\delta<\gamma} m_\delta\right).
\]
Given any $(m_\gamma)_{\gamma<\beta}\in M^\beta$ with only finitely many indices $\gamma_0<\gamma_1<\cdots<\gamma_{n-1}<\beta$ for which $m_{\gamma_i}\ne 0$, the sequence $\bigl(s_\gamma=\sum_{\delta<\gamma} m_\delta\bigr)_{\gamma<\beta}$ satisfies by the inductive hypothesis $s_\gamma = m_{\gamma_0} + m_{\gamma_1} + \cdots + m_{\gamma_{n-1}}$ whenever $\gamma_{n-1}<\gamma<\beta$. Hence also $\sum_{\gamma<\beta} m_\gamma = m_{\gamma_0} + m_{\gamma_1} + \cdots + m_{\gamma_{n-1}}$ by (L1) and (L2).
\end{proof}

Now we can prove the main result which was announced in the introduction.

\begin{thm}\label{thm:ab5*-via-sums}
Let $\B$ be an abelian category with coproducts and a projective generator $P$. Then the following conditions are equivalent:
\begin{enumerate}
\item[(i)] Inverse limits are exact in $\B$.
\item[(ii)] The canonical map $P^{(X)} \rarrow P^X$ is surjective for every set $X$.
\item[(iii)] The diagonal map $\Delta_X\:P\rarrow P^X$ factors through $P^{(X)} \rarrow P^X$ for every $X$.
\end{enumerate}
\end{thm}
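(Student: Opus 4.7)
The plan is to translate the theorem to the monadic setting $\B\simeq\boT\modl$, $P\leftrightarrow\boT(1)$, via Lemma~\ref{lem:monads}, and then assemble Theorem~\ref{thm:ab5*-via-limits}, Proposition~\ref{prop:sums-vs-lim}, and Lemma~\ref{lem:sum-syntax}. Under the equivalence $\Hom_\B(P,-)$, the object $P^{(X)}$ corresponds to the free module $\boT(X)=\boT(1)^{(X)}$, the object $P^X$ corresponds to $\boT(1)^X$, the canonical map $\eta_{P,X}\:P^{(X)}\rarrow P^X$ corresponds to the comparison map $\boT(X)\rarrow\boT(1)^X$ featured in Lemma~\ref{lem:sum-syntax}, and the diagonal $\Delta_X\:P\rarrow P^X$ corresponds to the constant $X$\+tuple whose value is the free generator of $\boT(1)$. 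Consequently, a factorization $\Delta_X=\eta_{P,X}\circ\sigma$ is a $\boT$\+module map $\boT(1)\rarrow\boT(X)$, hence an element of $\boT(X)$, whose image in $\boT(1)^X$ is that constant tuple; by Lemma~\ref{lem:sum-syntax} this is precisely a summation term $\sum_X\in\boT(X)$. Thus condition~(iii) is equivalent to the existence of a summation term in $\boT(X)$ for every set~$X$.

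For the equivalence (i) $\Leftrightarrow$ (iii), I would argue as follows. Since $\B$ is abelian, inverse limits preserve kernels automatically, so (i) is equivalent to the preservation of epimorphisms by inverse limits, which in the additive setting coincide with regular epimorphisms. By Theorem~\ref{thm:ab5*-via-limits} this holds if and only if a limit term $\lim_\alpha\in\boT(\alpha)$ exists for every ordinal~$\alpha$, and by Proposition~\ref{prop:sums-vs-lim} this is in turn equivalent to the existence of a summation term $\sum_\alpha\in\boT(\alpha)$ for every ordinal~$\alpha$. Since $\boT$ is a functor, $\boT(X)$ depends on $X$ only up to bijection, so demanding a summation term for every ordinal is the same as demanding one for every set $X$, which by the previous paragraph is precisely~(iii).

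For the equivalence (ii) $\Leftrightarrow$ (iii), the implication (ii) $\Rightarrow$ (iii) is immediate since $P$ is projective. Conversely, given a lift $\sigma\:P\rarrow P^{(X)}$ of $\Delta_X$ and an arbitrary morphism $f=(\phi_x)_{x\in X}\:P\rarrow P^X$, I form the coproduct endomorphism $\bigoplus_x\phi_x\:P^{(X)}\rarrow P^{(X)}$ acting as $\phi_x$ on the $x$\+th summand. A short check shows that $\eta_{P,X}$ intertwines $\bigoplus_x\phi_x$ with $\prod_x\phi_x$, whence $\eta_{P,X}\circ(\bigoplus_x\phi_x)\circ\sigma=(\prod_x\phi_x)\circ\Delta_X=f$; thus every morphism $P\rarrow P^X$ factors through $\eta_{P,X}$. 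Applying the exact functor $\Hom_\B(P,-)$ to the right-exact sequence $P^{(X)}\rarrow P^X\rarrow Z\rarrow 0$, where $Z=\coker\eta_{P,X}$, one obtains $\Hom_\B(P,Z)=0$; since $P$ is a generator, $Z=0$, so $\eta_{P,X}$ is an epimorphism, giving~(ii). The only nontrivial points are the dictionary-level identifications and the naturality check relating $\bigoplus_x\phi_x$ to $\prod_x\phi_x$ via $\eta_{P,X}$; there is no serious obstacle, since the substance of the theorem is already contained in Theorem~\ref{thm:ab5*-via-limits}, Proposition~\ref{prop:sums-vs-lim}, and Lemma~\ref{lem:sum-syntax}.
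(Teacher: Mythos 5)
Your proposal is correct and follows essentially the same route as the paper: reduce to $\boT\modl$ via Lemma~\ref{lem:monads}, identify (i) with the existence of limit terms (Theorem~\ref{thm:ab5*-via-limits}), identify (iii) with the existence of summation terms (Lemma~\ref{lem:sum-syntax}), link the two by Proposition~\ref{prop:sums-vs-lim}, and prove (iii)$\Rightarrow$(ii) by conjugating an arbitrary $f=(f_x)\colon P\rarrow P^X$ through the square intertwining $\coprod_x f_x$ with $\prod_x f_x$. Your explicit closing step via $\Hom_\B(P,\coker\eta_{P,X})=0$ is a detail the paper leaves implicit, but it is the intended argument.
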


\begin{proof}
We can without loss of generality assume that $\B=\boT\modl$ and $P=\boT(1)$, where $\boT$ is an additive monad on $\Sets$. Condition (i) is equivalent to the existence of a limit term $\lim_\alpha$ for each ordinal number $\alpha$ by Theorem~\ref{thm:ab5*-via-limits}, while condition (iii) is equivalent to the existence of a summation term $\sum_X$ for every set $X$ by Lemma~\ref{lem:sum-syntax}. The equivalence between (i) and (iii) then immediately follows from Proposition~\ref{prop:sums-vs-lim}.

Since $P$ is projective, (ii) implies (iii). The converse was discussed already in the introduction. If $f=(f_x)\:P\rarrow P^X$ is any morphism in $\B$, where $f_x\:P\rarrow P$ denote its components, then $f$ factorizes to the composition in the lower row of the following diagram,
\[
\xymatrix{
& P^{(X)} \ar[d] \ar[r]^-{\coprod f_x} & P^{(X)} \ar[d] \\
P \ar[r]_-{\Delta_X} \ar@{.>}[ur] & P^X \ar[r]_-{\prod f_x} & P^X.
}
\]
If $\Delta_X$ factors through the map $P^{(X)} \rarrow P^X$, so does $f$.
\end{proof}

\bigskip

\end{document}